\newtheorem{theorem}{Theorem}
\newtheorem{corollary}[theorem]{Corollary}
\newtheorem{lemma}[theorem]{Lemma}
\newtheorem{proposition}[theorem]{Proposition}
\newtheorem{remark}[theorem]{Remark}
\newcommand{\Ker}{\operatorname{Ker}}
\newcommand{\End}{\operatorname{End}}
\newcommand{\Ima}{\operatorname{Im}}
\begin{document}
\title[Automorphism-invariant modules]{Automorphism-invariant modules satisfy the exchange property}
\author{Pedro A. Guil Asensio}
\thanks{First author was partially supported by the DGI (MTM2010-20940-C02-02) 
and by the Excellence Research Groups Program of the S\'eneca Foundation of the Region of Murcia. Part of
the sources of both institutions come from the FEDER funds of the European Union.}
\address{Departamento de Mathematicas, Universidad de Murcia, Murcia, 30100, Spain}
\email{paguil@um.es}
\author{Ashish K. Srivastava}
\address{Department of Mathematics and Computer Science, St. Louis University, St.
Louis, MO-63103, USA}
\email{asrivas3@slu.edu}
\keywords{automorphism-invariant modules, injective modules, quasi-injective modules}
\subjclass[2000]{16D50, 16U60, 16W20}

\begin{abstract}
Warfield proved that every injective module has the exchange property. This was generalized by Fuchs who showed that quasi-injective modules satisfy the exchange property. We extend this further and prove that a 
module invariant under automorphisms of its injective hull satisfies the exchange property. We also show that automorphism-invariant modules are clean and that directly-finite automorphism-invariant modules satisfy the internal cancellation and hence the cancellation property.
\end{abstract}

\maketitle

\section{Introduction.}

\noindent A module $M$ which is invariant under automorphisms of its injective hull is called an {\it automorphism-invariant module}. This class of modules was first studied by Dickson and Fuller in \cite{DF} for the particular case of finite-dimensional algebras over fields $\mathbb F$ with more than two elements. Clearly, every quasi-injective and hence injective module is automorphism-invariant. Recently, it has been shown in \cite{ESS} that a module $M$ is automorphism-invariant if and only if every monomorphism from a submodule of $M$ extends to an endomorphism of $M$. 
And it has been shown in \cite{GS} that any automorphism-invariant module $M$ is quasi-injective provided that $\End(M)$ has no homomorphic image isomorphic to the field of two elements $\mathbb{F}_2$, thus extending the results obtained by Dickson and Fuller in \cite{DF}. However, Remark \ref{nonqi} below shows that  an automorphism-invariant module does not need to be quasi-injective if we do not assume this additional hypothesis. For more details on automorphism-invariant modules, see \cite{ESS, GS, LZ, SS1, SS2, Teply}. 

On the other hand, it was shown by Fuchs \cite{Fuchs} that every quasi-injective module satisfies the exchange property, thus extending a previous result of Warfield for injective modules \cite{Warfield1}. Recall that the notion of exchange property for modules was introduced by 
Crawley and J\'{o}nnson \cite{CJ}. A right
 $R$-module $M$ is said to satisfy the {\it exchange property} if for every right $R$-module $A$ and any two direct sum decompositions $A=M^{\prime}\oplus N=\oplus_{i\in \mathcal I}A_{i}$ with $M^{\prime} \simeq M$, there exist submodules $B_i$ of $A_i$ such that $A=M^{\prime} \oplus (\oplus_{i \in \mathcal I}B_i)$.
If this hold only for $|\mathcal I|<\infty$, then $M$ is said to satisfy the finite exchange property. Crawley and J\'{o}nnson raised the question whether the finite exchange property always implies the full exchange property but this question is still open. 

A ring $R$ is called an {\it exchange ring} if the module $R_R$ (or $_RR$) satisfies the (finite) exchange property. Goodearl \cite{Goodearl} and Nicholson \cite{Nichol} provided several equivalent characterizations for a ring to be an exchange ring. Warfield \cite{Warfield2} showed that exchange rings are left-right symmetric and that a module $M$ has the finite exchange property if and only if $\End(M)$ is an exchange ring.     

The goal of this paper is to show that, besides Remark \ref{nonqi} shows that an automorphism-invariant module $M$ does not need to be quasi-injective in general, it shares several important decomposition properties with quasi-injective modules. Namely, the endomorphism ring of an automorphism-invariant module $M$ is always a von Neumann regular ring modulo its Jacobson radical $J$, idempotents lift modulo $J$ and $J$ consists of those endomorphism of $M$ which have essential kernel (see Proposition \ref{semiregular}). As a consequence, we show in Theorem \ref{main} that any automorphism-invariant module satisfies the exchange property. 

A module $M$ is said to have the {\it cancellation property} if whenever $M\oplus A\cong M\oplus B$, then $A\cong B$. A module $M$ is said to have the {\it internal cancellation property} if whenever $M=A_1\oplus B_1\cong A_2\oplus B_2$ with $A_1\cong A_2$, then $B_1\cong B_2$. A module with the cancellation property always satisfies the internal cancellation property but the converse need not be true, in general. Fuchs \cite{Fuchs} had shown that if $M$ is a module with the finite exchange property, then $M$ has the cancellation property if and only if $M$ has the internal cancellation property. 

A module $A$ is said to have the {\it substitution property} if for every module $M$ with decompositions $M=A_1\oplus H=A_2\oplus K$ with $A_1\cong A\cong A_2$, there exists a submodule $C$ of $M$ (necessarily $\cong A$) such that $M=C\oplus H=C\oplus K$.

A module $M$ is called {\it directly-finite} (or {\it Dedekind-finite}) if $M$ is not isomorphic to a proper summand of itself. A ring $R$ is called directly-finite if $xy=1$ implies $yx=1$ for any $x, y\in R$. It is well-known that a module $M$ is directly-finite if and only if its endomorphism ring $\End(M)$ is directly-finite. 

In general, we have the following hierarchy;\\
\\
substitution$\implies$ cancellation$\implies$internal cancellation$\implies$directly-finite

In this paper, we show in Corollary \ref{internal} that for an automorphism-invariant module the above four notions are equivalent. 

\bigskip

%As an application, we show in Corollary \ref{internal} that any automorphism-invariant directly finite module satisfied the internal cancelation property, thus solving an open question posed in \ref{}

Throughout this paper, $R$ will always denote an associative ring with identity element and modules will be right unital. $J(R)$ will denote the Jacobson radical of the ring $R$. We will use the notation $N\subseteq_e M$ to stress that $N$ is an essential submodule of $M$. We refer to \cite{AF} and \cite{MM} for any undefined notion arising in the text.

\section*{Results.}

\noindent It was proved by Faith and Utumi \cite{FU} that if $M$ is a quasi-injective module and $R=\End(M)$, then $J(R)$ consists of all endomorphisms of $M$ having essential kernel and $R/J(R)$ is a von Neumann regular ring. Later, Osofsky \cite{O} proved that $R/J(R)$ is right self-injective too.

Let us fix some notation that we will follow along this paper. Let $M$ be a module and $E=E(M)$, its injective hull. Call $R=\End(M)$, $S=\End(E)$ and $J(S)$, the Jacobson radical of $S$. Let us denote by $\varphi: R\rightarrow S/J(S)$ the ring homomorphism which assigns to every $r\in\End(M)$ the element $s+J(S)$, where $s:E\rightarrow E$ is an extension of $r$ (see e.g. \cite{GS}). Set $\Delta=\{r \in R: \Ker(r)\subseteq_e M\}$. As $J(S)$ consists of all endomorphisms $s\in S$ having essential kernel, we get that $\Ker(\varphi)=\Delta$ and thus, $\varphi$ factors through an injective ring homomorphism $\psi: R/\Delta \rightarrow S/J(S)$. 

Let $N$ be a submodule of a module $M$. A submodule $L$ of $M$ is called a complement of $N$ in $M$ if it is maximal with respect to the condition $N\cap L=0$. Using Zorn's Lemma it is very easy to check that any submodule $N$ of $M$ has a (not necessarily unique) complement $L$ and $N\oplus L\subseteq_e M$ (see e.g. \cite[Proposition 5.21]{AF}). 

In our first proposition, we extend the above mentioned result of Faith and Utumi to automorphism-invariant modules.

\begin{proposition}\label{semiregular}
Let $M$ be an automorphism-invariant module. Then $\Delta=J(R)$ is the Jacobson radical of $R$, $R/J(R)$ is a von Neumann regular ring and idempotents lift modulo $J(R)$.
\end{proposition}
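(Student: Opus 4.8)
The plan is to transport the structure of $S/J(S)$ — which is von Neumann regular by Osofsky's theorem, since $E$ is injective and hence quasi-injective — across the injective ring homomorphism $\psi : R/\Delta \to S/J(S)$, using automorphism-invariance to control the image. First I would record the general fact that $\Delta$ is always an ideal of $R$ (if $\Ker r$ and $\Ker r'$ are essential then so is $\Ker(r+r')$, and $\Ker(rs)\supseteq \Ker s$, $\Ker(sr)$ contains $s^{-1}(\Ker r)$ which is essential when $\Ker r$ is) and that $\varphi$ is a well-defined ring homomorphism with $\Ker\varphi=\Delta$, so $\psi$ is a monomorphism. The heart of the matter is to show that $\psi$ is in fact \emph{surjective}; once that is known, $R/\Delta \cong S/J(S)$ is von Neumann regular, and then standard ring theory finishes the job: a ring whose quotient by an ideal is von Neumann regular and in which that ideal is idempotent-free in the appropriate sense has the ideal equal to its Jacobson radical, and one lifts idempotents along it.

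To prove surjectivity of $\psi$, take any $s \in S$; I want $r \in R = \End(M)$ with $s - \tilde r \in J(S)$ for some (equivalently any) extension $\tilde r$ of $r$. The key trick, which is exactly where automorphism-invariance enters, is the classical observation (used in the Osofsky–Utumi circle of ideas) that $S = J(S) + U$, where $U$ is the set of automorphisms of $E$: indeed, given $s\in S$, pick a complement $K$ of $\Ker s$ in $E$; then $s|_K$ is a monomorphism, $E = \Ker s \oplus K$ up to essential extension, and one can build an automorphism $g$ of $E$ agreeing with $s$ on $K$ and chosen so that $g - s$ has essential kernel, i.e.\ lies in $J(S)$. (This uses that $E$ is injective, so the monomorphism $s|_K : K \to E$ and a suitable map on a complement assemble into an automorphism after passing to injective hulls of the relevant pieces.) Now since $M$ is automorphism-invariant, $g(M) = M$, so $g|_M \in \End(M) = R$; setting $r = g|_M$ we get that $g$ is an extension of $r$ to $E$, hence $\varphi(r) = g + J(S) = s + J(S)$. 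Thus $\psi$ is onto and $R/\Delta \cong S/J(S)$ is von Neumann regular.

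It remains to identify $\Delta$ with $J(R)$ and lift idempotents. Since $R/\Delta$ is von Neumann regular, its Jacobson radical is zero, so $J(R) \subseteq \Delta$. For the reverse inclusion I would show $\Delta \subseteq J(R)$ directly: if $r \in \Delta$ then $\Ker r \subseteq_e M$, and for any $t \in R$ the element $1 - tr$ is a monomorphism (its kernel meets $\Ker r$ trivially by a quick check, forcing it to be $0$ by essentiality), so $1 - tr$ extends to a monomorphism of $M$ which, by the characterization recalled in the introduction (automorphism-invariant $\Leftrightarrow$ every monomorphism from a submodule extends to an endomorphism — and here it is injective on $M$ itself), lifts to an automorphism of $E$; hence $1-tr$ is invertible in $S$ and restricts to an automorphism of $M$ by automorphism-invariance, giving that $1 - tr$ is a unit in $R$. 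Since this holds for all $t$, $r \in J(R)$, so $\Delta = J(R)$. Finally, idempotents lift modulo $J(R) = \Delta$: given $e \in R$ with $e^2 - e \in \Delta$, its image $\bar e$ in $R/\Delta \cong S/J(S)$ is idempotent; since $S$ is the endomorphism ring of an injective (hence quasi-injective) module, idempotents lift modulo $J(S)$ by the Faith–Utumi result, producing an idempotent $f \in S$ with $f + J(S) = \psi(\bar e)$; then $f(M) = M$ because $\bar f = \bar e = \bar{e^2} = \dots$ can be arranged via automorphism-invariance — more carefully, one uses that $f$ and $1-f$ split $E$ and that the corresponding decomposition can be pushed to $M$ — so $f|_M$ is the desired idempotent in $R$ lifting $e$.

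The main obstacle I anticipate is the surjectivity argument: making precise the construction of the automorphism $g$ of $E$ with $g \equiv s \pmod{J(S)}$ requires care (choosing the complement, checking that the assembled map is genuinely bijective on $E$ and not merely on a dense submodule, and verifying the kernel of $g - s$ is essential), and it is exactly here that injectivity of $E$ and automorphism-invariance of $M$ must be combined; the rest is essentially bookkeeping with essential submodules and standard idempotent-lifting.
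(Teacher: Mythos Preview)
Your proof hinges on the surjectivity of $\psi:R/\Delta\to S/J(S)$, and this is exactly where it breaks. The assertion ``$S=J(S)+U$'' is false for any $E\neq 0$: if $s\in S$ has nonzero kernel (for instance $s=0$, or any nontrivial idempotent) then no automorphism $g$ can satisfy $g-s\in J(S)$, because the essential submodule $\Ker(g-s)$ would meet $\Ker s$ nontrivially and force $g$ to annihilate a nonzero element. Your sketched construction only makes $g$ agree with $s$ on a complement $K$ of $\Ker s$, and $K$ is not essential, so $g-s\notin J(S)$. More decisively, Remark~\ref{nonqi} of the paper exhibits an automorphism-invariant $M$ for which $R/J(R)$ is \emph{not} self-injective, while $S/J(S)$ is always self-injective by Osofsky's theorem; hence $\psi$ cannot be an isomorphism in that example. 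Since a subring of a von Neumann regular ring need not be regular, the injectivity of $\psi$ alone buys you nothing.

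The paper proceeds quite differently: it never claims $\psi$ is onto, but constructs for each $r\in R$ a quasi-inverse $t\in R$ directly. One modifies an extension $s$ of $r$ by an element of $J(S)$ to get $g\in S$ with $g|_{E(\Ker r)}=0$ and $g(M)\subseteq M$, takes a one-sided inverse $h$ of $g|_{E(L)}$, and then---this is the key step---uses the characterization from \cite{ESS} (monomorphisms from submodules of $M$ extend to endomorphisms of $M$) to produce $t\in R$ extending $h$ on an essential piece; the relation $rtr\equiv r\pmod{\Delta}$ is verified via the \emph{injectivity} of $\psi$. Your argument for $\Delta\subseteq J(R)$ also has a gap: a monic endomorphism of $M$ extends to a monic endomorphism of $E$, but that need not be an automorphism of $E$ when $E$ is not directly finite, so the appeal to automorphism-invariance does not go through as written. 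The paper instead invokes the $(C_2)$ property of automorphism-invariant modules from \cite{ESS}: $(1-a)(M)\cong M$ is therefore a direct summand of $M$, and since it contains the essential submodule $\Ker a$ it must equal $M$.
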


\begin{proof}
Let $r\in R$ and let $s\in E$ be an extension of $R$. Call $K=\Ker(r)$. Let $L$ be a complement of $K$ in $M$. Then $K\oplus L\subseteq_{e} M$ and thus, $E=E(K)\oplus E(L)$.  Set $g\in S$ as $g|_{E(K)}=0$ and $g|_{E(L)}=s|_{E(L)}$. Then $(g-s)|_{K\oplus E(L)}=0$ and therefore, $g-s \in J(S)$. Therefore, $1-(g-s)$ is an automorphism of $E$. Since $M$ is automorphism-invariant, $(1-(g-s))(M)\subseteq M$. This means that $(g-s)(M)\subseteq M$. Now as $s$ is an extension of $r\in R$, we have $s(M)\subseteq M$. This yields $g(M)\subseteq M$. 

As $L\cap \Ker(g)=0$, $g|_{E(L)}$ is a monomorphism. Let $E'=\Ima(g)=\Ima(g|_{E(L)})$. Then $E'\cong E(L)$ is injective. Moreover, as $g|_{E(L)}:E(L)\rightarrow E'$ is an isomorphism, there exists a homomorphism $h:E'\rightarrow E(L)$  such that $h\circ g\circ u=u\circ1_{E(L)}$ and thus, $u\circ h\circ g=u\circ \pi$, where $u:E(L)\rightarrow E$ and $\pi:E\rightarrow E(L)$ are the inclusion and projection associated to the decomposition $E=E(K)\oplus E(L)$. 
 As $L$ is essential in $E$, $g(L)$ is essential in $E'$ and thus, $N=M\cap g(L)$ is also essential in $E'$, because $M$ is essential in $E$. 
As $M$ is automorphism-invariant, this means that the monomorphism $h|_{N}:N\rightarrow L\subseteq M$ extends to an endomorphism $t:M\rightarrow M$ (see \cite{ESS}). 
Let $t':E\rightarrow E$ be an extension of $t$. As $N$ is essential in $E'=\Ima(g)$, $g^{-1}(N)$ is essential in $E$ and thus, $N'=(K\oplus L)\cap g^{-1}(N)$ is also essential in $E$.
Let us choose an element $x\in N'$. By definition of $N'$, we get that $g(x)\in N$. Moreover, we can write $x=k+l$ with $k\in K$ and $l\in L$. Therefore, $g(l)=g(k)+g(l)=g(x)\in N\subseteq M$. Thus, $t'\circ g(x)=t'\circ g(l)=t\circ g(l)=h\circ g(l)=l$. 
We have shown that $t'\circ g|_{N'}=u\circ \pi|_{N'}$ and, as $N'$ is essential in $E$, this means that $t'\circ g + J(S)=u\circ \pi + J(S)$.
Therefore, $s\circ t'\circ s + J(S) = g\circ t'\circ g + J(S) = g\circ u\circ \pi +J(S)= g+J(S)=s+J(S)$. But then, we deduce that $\psi((r\circ t\circ r)+\Delta)=(s\circ t'\circ s) + J(S)=s+J(S)=\psi(r+\Delta)$ and, as $\psi$ is injective, we get that $(r\circ t\circ r)+\Delta=r+\Delta$. This shows that $R/\Delta$ is von Neumann regular.

Since $R/\Delta$ is von Neumann regular, $J(R/\Delta)=0$. This gives $J(R)\subseteq \Delta$. Now let $a\in \Delta$. Since $\Ker(a) \cap \Ker(1-a)=0$ and $\Ker(a) \subseteq_{e} M$, $\Ker(1-a)=0$. Hence $(1-a)$ is an isomorphism from $M$ to $(1-a)(M)$. Since $M$ is automorphism-invariant, $M$ satisfies the ($C_2$) property (see \cite{ESS}), that is, submodules isomorphic to a direct summand of $M$ are direct summands. Therefore, $(1-a)(M)$ is a direct summand of $M$. However, $(1-a)(M) \subseteq_{e} M$ since $\Ker(a) \subseteq (1-a)(M)$. Thus $(1-a)(M)=M$ and therefore, $1-a$ is a unit in $R$. This means that $a\in J(R)$ and therefore $\Delta\subseteq J(R)$. Hence $J(R)=\Delta$. 

This shows that $R/J(R)$ is a von Neumann regular ring. Now, we proceed to show that idempotents lift modulo $J(R)$. Let $e'+J(R)$ be an idempotent in $R/J(R)$. Let $f'+J(S)=\psi(e'+J(R))$. Then $f'+J(S)$ is an idempotent in $S/J(S)$. Since idempotents lift modulo $J(S)$, there exists an idempotent $f$ in $S$ such that $f'=f+j$ with $j\in J(S)$. Now, $1-j$ is a unit in $S$, and so $M$ is invariant under $1-j$ and hence $j(M)\subseteq M$. And thus, $f(M)\subseteq f'(M)+j(M)\subseteq M$. This means that $e=f|_M$ belongs to $R=\End(M)$ and it is an idempotent since so is $f$. By construction, $\psi(e+J(R))=f+J(S)=f'+J(S)=\psi(e'+J(R))$. And, as $\psi$ is an injective homomorphism, we deduce that $e+J(R)=e'+J(R)$.  This shows that idempotents lift modulo $J(R)$.
\end{proof}

\begin{remark}\label{nonqi}
Note that in the above proposition unlike quasi-injective modules, $R/J(R)$ need to be right self-injective. For example, let $R$ be the ring of all eventually constant sequences $(x_n)_{n \in \mathbb{N}}$ of elements in  $\mathbb{F}_2$. Then $E(R_R)=\prod_{n\in \mathbb{N}}\mathbb{F}_2$, and it has only one automorphism, namely the identity automorphism. Thus, $R_R$ is automorphism-invariant but it is not self-injective. Also, as $R$ is von Neumann regular, $J(R)=0$. This means $R/J(R)$ is not self-injective.  
\end{remark}

As a consequence of the above proposition, we are ready to generalize the results of Warfield and Fuchs. 

\begin{theorem}\label{main}
An automorphism-invariant module satisfies the exchange property.
\end{theorem}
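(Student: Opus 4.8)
The plan is to argue in two stages. First, $M$ has the finite exchange property: this falls straight out of Proposition~\ref{semiregular}. Write $R=\End(M)$. By that proposition $R/J(R)$ is von Neumann regular and idempotents lift modulo $J(R)$. Von Neumann regular rings are exchange rings, and by Nicholson's characterization \cite{Nichol} a ring is an exchange ring exactly when it is one modulo its Jacobson radical and idempotents lift modulo that radical; hence $R$ is an exchange ring, and since a module has the finite exchange property precisely when its endomorphism ring is an exchange ring \cite{Warfield2}, $M$ has the finite exchange property.

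The second stage, upgrading finite exchange to full exchange, is the substantial one; here I would imitate the transfinite exhaustion arguments used by Warfield for injective modules \cite{Warfield1} and by Fuchs for quasi-injective modules \cite{Fuchs}, with automorphism-invariance doing the work that (quasi-)injectivity does there. So suppose $A=M'\oplus N=\bigoplus_{i\in I}A_{i}$ with $M'\cong M$; I must produce submodules $B_{i}\subseteq A_{i}$ with $A=M'\oplus\bigl(\bigoplus_{i\in I}B_{i}\bigr)$. The idea is to apply Zorn's Lemma to the set of \emph{partial exchanges}: a subset $T\subseteq I$ together with submodules $B_{i}\subseteq A_{i}$ for $i\in T$ and a direct decomposition of $A$ displaying a (possibly shifted) copy of $M$, these $B_{i}$, and complements of the $A_{i}$ for $i\notin T$; order them by extension of $T$. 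Directness of a sum is finitary and every element of $A$ lies in a finite subsum, so chains have upper bounds, and a maximal partial exchange exists. Everything reduces to showing that its index set equals $I$: if $j$ lies outside it, one must peel a direct summand $B_{j}\subseteq A_{j}$ off and absorb $j$, contradicting maximality --- and allowing the copy of $M$ to drift during the induction is what makes the finite exchange property usable at this step.

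This extension step is the main obstacle. The two-element exchange property of $M$ cannot be invoked naively, since the summand carrying the copy of $M$ also carries residual pieces; and one cannot pass to injective hulls wholesale, because $E\bigl(\bigoplus_{i}A_{i}\bigr)$ need not equal $\bigoplus_{i}E(A_{i})$ for infinite $I$, so the classical solution of the exchange problem for the injective hull $E(M)$ does not descend mechanically. The way out, I expect, is to use three facts about an automorphism-invariant $M$ as a replacement for injectivity: it satisfies the $(C_{2})$ condition, so a submodule isomorphic to a direct summand of $M$ is itself a direct summand \cite{ESS}; every monomorphism from a submodule of $M$ into $M$ extends to an endomorphism of $M$ \cite{ESS}; and, by Proposition~\ref{semiregular}, $J(R)=\Delta$ is exactly the set of endomorphisms of $M$ with essential kernel. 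Concretely, the finite exchange property applied inside the finitely-indexed portion of the decomposition produces a new complement of the moving copy of $M$; the $(C_{2})$ condition and the extension property for monomorphisms let one replace its $j$-component by a genuine direct summand $B_{j}$ of $A_{j}$; and the essential-kernel description of $J(R)$ absorbs the essentiality mismatches, keeping the new choice compatible with the $B_{i}$ already fixed --- exactly the bookkeeping that essentiality handles in the (quasi-)injective case. Forcing the index set to be $I$ then yields the desired decomposition $A=M'\oplus\bigl(\bigoplus_{i\in I}B_{i}\bigr)$.
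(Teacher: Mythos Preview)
Your first stage is correct and coincides with the paper's. The second stage, however, is a program rather than a proof, and the program has a real obstruction. Fuchs's transfinite argument for quasi-injective $M$ hinges on $M$ being $M$-injective: \emph{every} homomorphism from a submodule of $M$ into $M$ extends to an endomorphism of $M$. An automorphism-invariant module only enjoys this extension property for \emph{monomorphisms} \cite{ESS}, and that gap is exactly why such modules need not be quasi-injective (Remark~\ref{nonqi}). In the Zorn-style exhaustion the maps that need extending arise as projections onto summands followed by inclusions; they are typically not injective. Your three replacement tools --- $(C_{2})$, extension of monomorphisms, and $J(R)=\Delta$ --- do not obviously handle that, and the phrase ``exactly the bookkeeping that essentiality handles'' papers over the point where the argument would have to diverge from Fuchs's. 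You have located the obstacle correctly but not removed it.

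The paper sidesteps the transfinite step entirely via a structural decomposition. By \cite[Theorem~3]{ESS}, any automorphism-invariant module splits as $M=P\oplus Q$ with $Q$ quasi-injective and $P$ square-free. Each summand inherits the finite exchange property from $M$; then $Q$ has full exchange by Fuchs \cite{Fuchs}, while $P$ has full exchange because for square-free modules finite exchange already implies full exchange, by Nielsen \cite{Nielsen}. A finite direct sum of modules with full exchange again has full exchange, so $M$ does. This reduction trades your delicate inductive bookkeeping for two citations and is the key idea you are missing.
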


\begin{proof}
Let $M$ be an automorphism-invariant module and $R=\End(M)$. By the above proposition, $R/J(R)$ is a von Neumann regular ring and idempotents lift modulo $J(R)$. Such rings are called semiregular rings (or f-semiperfect rings). Nicholson in \cite[Proposition 1.6]{Nichol} proved that every semiregular ring is an exchange ring. Hence $R$ is an exchange ring. This proves that $M$ has the finite exchange property. 

Now, we know that $M=P\oplus Q$ where $Q$ is quasi-injective and $P$ is a square-free module \cite[Theorem 3]{ESS}. Since a direct summand of a module with the finite exchange property also has the finite exchange property, $P$ has the finite exchange property. But for a square-free module, the finite exchange property implies the full exchange property \cite[Theorem 9] {Nielsen}. So $P$ has the full exchange property. We already know that every quasi-injective module has the full exchange property, so $Q$ has the full exchange property. Since a direct sum of two modules with the full exchange property also has the full exchange property, it follows that $M$ has the full exchange property.         
\end{proof}

We would like to thank Professor Yiqiang Zhou for kindly pointing out the next corollary. Recall that a ring $R$ is called a {\it clean ring} if each element $a\in R$ can be expressed as $a=e+u$ where $e$ is an idempotent in $R$ and $u$ is a unit in $R$. A module $M$ is called a clean module if $\End(M)$ is a clean ring. It was shown in \cite{CKLNZ} that continuous modules are clean. 

\begin{corollary}
Automorphism-invariant modules are clean.
\end{corollary}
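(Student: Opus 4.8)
The plan is to prove the stronger statement that $R:=\End(M)$ is a clean ring, which yields the corollary by definition. First I would reduce to the quotient by the radical. By Proposition \ref{semiregular}, $R$ is semiregular: $\overline R:=R/J(R)$ is von Neumann regular and idempotents lift modulo $J(R)$. This reduces the problem to showing that $\overline R$ is clean: given $a\in R$, pick a clean decomposition $\overline a=\overline e+\overline u$ in $\overline R$ with $\overline e$ idempotent and $\overline u$ a unit, lift $\overline e$ to an idempotent $e$ of $R$ (possible by Proposition \ref{semiregular}), and note that $a-e$ reduces to the unit $\overline u$ modulo $J(R)$, hence is itself a unit of $R$; then $a=e+(a-e)$ is the desired clean decomposition.

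To prove that the von Neumann regular ring $\overline R$ is clean I would invoke the structure theorem \cite[Theorem 3]{ESS}: $M=P\oplus Q$ with $Q$ quasi-injective and $P$ square-free. A quasi-injective module is continuous, so $\End(Q)$ is clean by \cite{CKLNZ}, and hence so is $\End(Q)/J(\End Q)$. The square-free summand $P$ is again automorphism-invariant, so Proposition \ref{semiregular} applies to it; I would then argue that square-freeness forces every idempotent of the regular ring $\End(P)/J(\End P)$ to be central (two overlapping-free isomorphic direct summands of $P$ would produce a square inside $P$), so that $\End(P)/J(\End P)$ is an abelian exchange ring, hence clean by \cite{Nichol}. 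Finally, since the square-free part $P$ and the quasi-injective part $Q$ have no common nonzero direct summand in their injective hulls, $\mathrm{Hom}(P,Q)$ and $\mathrm{Hom}(Q,P)$ vanish modulo the respective radicals, so $\overline R$ splits as the direct product $\End(P)/J(\End P)\times \End(Q)/J(\End Q)$ of two clean rings and is therefore clean.

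The hard part will be the structural input in the last step: verifying that the square-free and quasi-injective summands in the decomposition of \cite{ESS} do not interact modulo the Jacobson radical---so that $\overline R$ genuinely decomposes as a product---and that square-freeness really does force $\End(P)/J(\End P)$ to be abelian; both facts should be extracted from the analysis carried out in \cite{ESS}. Alternatively, this whole step can be bypassed if one appeals to the general fact that von Neumann regular rings are clean, in which case the corollary follows at once from Proposition \ref{semiregular} together with the lifting argument of the first paragraph.
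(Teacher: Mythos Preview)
Your main line is essentially the paper's: decompose $M=P\oplus Q$ via \cite[Theorem~3]{ESS}, use \cite{CKLNZ} to get $Q$ clean, use square-freeness of $P$ to see that $\End(P)/J(\End(P))$ is an abelian exchange (hence clean) ring, lift idempotents via Proposition~\ref{semiregular} to get $\End(P)$ clean, and then combine the two pieces. The only difference is packaging: the paper concludes at the module level (``$P$ clean and $Q$ clean, hence $M$ clean''), whereas you pass first to $\overline R$ and argue a product splitting $\overline R\cong \End(P)/J(\End P)\times \End(Q)/J(\End Q)$. Both versions rest on the same orthogonality between $P$ and $Q$ that you correctly flag as the point needing justification from \cite{ESS}.

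Your proposed shortcut in the last sentence, however, is a genuine error. It is \emph{not} a general fact that von Neumann regular rings are clean; this was an open question of Nicholson for some time, and counterexamples are now known (\v{S}ter). What \emph{is} true is that unit-regular rings are clean (Camillo--Khurana) and that abelian regular rings are clean---and the latter is precisely what the square-free hypothesis on $P$ is buying you. So Proposition~\ref{semiregular} together with the lifting argument of your first paragraph does not by itself yield the corollary; the decomposition $M=P\oplus Q$ and the separate analysis of the two summands cannot be bypassed.
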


\begin{proof}
Let $M$ be an automorphism-invariant module. We have $M=P\oplus Q$ where $Q$ is quasi-injective and $P$ is a square-free module \cite[Theorem 3]{ESS}. By \cite{CKLNZ}, $Q$ is a clean module. By the above theorem, $\End(P)$ is an exchange ring. We know that the idempotents in $\End(P)/J(\End(P))$ are central (see \cite{MM}). Thus $\End(P)/J(\End(P))$ is an exchange ring with all idempotents central. Hence $\End(P)/J(\End(P))$ is a clean ring. Since idempotents lift modulo $J(\End(P))$ by the Proposition \ref{semiregular}, it follows that $\End(P)$ is a clean ring. Thus $P$ is a clean module and hence $M$ is clean.  
\end{proof}

We recall that a module $M$ is called indecomposable if its only direct summands are $0$, and $M$. And a decomposition of a module $M$ as a direct sum of indecomposable modules, say $M=\oplus_{i\in I}M_i$, is said to complement (maximal) direct summands provided that for any (resp. maximal) direct summand $N$ of $M$ there exists a subset $I'\subseteq I$ such that $M=N\oplus(\oplus_{i\in I'}M_i)$ (see \cite[\S12]{AF}). In particular, it is shown in \cite[12.4]{AF} that if $M=\oplus_{i\in I}M_i$ is a decomposition that complements (maximal) direct summands, then this decomposition is unique up to isomorphisms in the sense of the Krull-Remak-Schmidt Theorem. On the other hand, it has been shown in \cite[Theorem 2.25]{MM} that a decomposition of a module $M=\oplus_{i\in I}M_i$, satisfying that $\End(M_i)$ is a local ring for every $i\in I$, complements maximal direct summands if and only if $M$ is an exchange module. We can then show.

\begin{corollary}
Let $M$ be an automorphism-invariant module. If $M$ is a direct sum of indecomposable modules, then this decomposition complements direct summands.
\end{corollary}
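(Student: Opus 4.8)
The plan is to combine Theorem~\ref{main} with the general decomposition theory quoted from \cite{MM} and \cite{AF}. First I would note that since $M$ is automorphism-invariant and decomposes as a direct sum of indecomposable modules $M=\oplus_{i\in I}M_i$, each summand $M_i$ is itself automorphism-invariant (being a direct summand of an automorphism-invariant module, which is again automorphism-invariant — this follows since a direct summand of a module satisfying the extension property for monomorphisms from submodules again satisfies it, using the characterization from \cite{ESS}). Hence, by Theorem~\ref{main}, each $M_i$ has the (finite) exchange property, so $\End(M_i)$ is an exchange ring; being additionally indecomposable, $\End(M_i)$ has no nontrivial idempotents, and an indecomposable exchange ring is local. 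Thus $\End(M_i)$ is local for every $i\in I$.

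Next I would invoke the cited result \cite[Theorem 2.25]{MM}: a decomposition $M=\oplus_{i\in I}M_i$ with each $\End(M_i)$ local complements maximal direct summands if and only if $M$ is an exchange module. Since $M$ has the full exchange property by Theorem~\ref{main}, it is in particular an exchange module, so the decomposition complements maximal direct summands. It then remains to upgrade ``complements maximal direct summands'' to ``complements direct summands.''

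For that last step I would use the full (not merely finite) exchange property of $M$ together with the local-endomorphism-ring hypothesis. Given an arbitrary direct summand $N$ of $M$, write $M=N\oplus N'$. Applying the exchange property of $M'\simeq M$ inside $A=M\oplus$(a copy arrangement) is not quite the right setup; instead the cleaner route is: since each $M_i$ has a local endomorphism ring, the decomposition $M=\oplus_{i\in I}M_i$ is a direct sum of modules with local endomorphism rings, and by the Azumaya--Krull--Remak--Schmidt--type results (see \cite[\S12]{AF}) such a decomposition complements direct summands precisely when $M$ has the exchange property. More directly, one applies the exchange property of $N$: since $N$ is a direct summand of the exchange module $M$, $N$ has the finite exchange property, but I actually want to exchange $N$ against the $M_i$'s. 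Write $M=N\oplus N'=\oplus_{i\in I}M_i$; by the exchange property applied to $N$ (using that $N$, as a summand of $M$, has the full exchange property), there are submodules $B_i\subseteq M_i$ with $M=N\oplus(\oplus_{i\in I}B_i)$. Since each $M_i$ is indecomposable, each $B_i$ is either $0$ or $M_i$, so setting $I'=\{i: B_i=M_i\}$ gives $M=N\oplus(\oplus_{i\in I'}M_i)$, which is exactly the assertion that the decomposition complements direct summands.

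The main obstacle is the verification that each indecomposable summand $M_i$ has a \emph{local} endomorphism ring: this requires knowing both that direct summands of automorphism-invariant modules are automorphism-invariant (hence have the exchange property by Theorem~\ref{main}) and the ring-theoretic fact that an indecomposable exchange ring is local. Once that is in place, everything else is a formal consequence of the quoted structure theorems and the indecomposability of the $M_i$, which forces the exchanged submodules $B_i$ to be trivial or everything.
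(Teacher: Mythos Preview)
Your argument is correct and ends in the same place as the paper, but the two proofs differ in how they establish that each $\End(M_i)$ is local, and in how much of the final step is spelled out.

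For locality, the paper does not pass through ``$M_i$ is automorphism-invariant'' or ``an indecomposable exchange ring is local.'' Instead it applies Proposition~\ref{semiregular} directly to $M$: since $\End(M)/J(\End(M))$ is von Neumann regular (with idempotents lifting), the same holds for the corner ring $\End(M_i)\cong e_i\,\End(M)\,e_i$; indecomposability of $M_i$ then forces $\End(M_i)/J(\End(M_i))$ to be a von Neumann regular ring with no nontrivial idempotents, hence a division ring, so $\End(M_i)$ is local. This is marginally quicker than your route, which needs the (true but extra) facts that direct summands of automorphism-invariant modules are again automorphism-invariant and that indecomposable exchange rings are local.

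After that, the paper just writes ``the result now follows from the previous comments'' and invokes \cite[Theorem~2.25]{MM}, leaving the passage from ``complements maximal direct summands'' to ``complements direct summands'' implicit. Your explicit argument for this step---applying the full exchange property of an arbitrary summand $N$ (which $N$ inherits from $M$) against $M=\oplus_i M_i$, and using indecomposability of the $M_i$ to force each $B_i\in\{0,M_i\}$---is a genuine addition: it makes the proof self-contained where the paper relies on the reader knowing the relevant part of \cite[\S12]{AF}. So your version is longer but more complete; the paper's is terser but leans more heavily on the cited background.
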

\begin{proof}
We know that $\End(M)/J(\End(M))$ is von Neumann regular and therefore, so is $\End(M_i)/J(\End(M_i))$, for every $i\in I$. And, as $M_i$ is indecomposable, this means that $\End(M_i)$ is local. The result now follows from the previous comments.
\end{proof}

Next, we state a useful lemma for directly-finite automorphism-invariant modules.

\begin{lemma} \label{df}
Let $M$ be an automorphism-invariant module. If $M$ is directly-finite, then so is $E=E(M)$.
\end{lemma}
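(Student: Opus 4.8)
The plan is to argue by contrapositive: assuming $E = E(M)$ is \emph{not} directly-finite, I will produce a proper direct summand of $M$ isomorphic to $M$, contradicting that $M$ is directly-finite. If $E$ is not directly-finite, then there are $\alpha, \beta \in S = \End(E)$ with $\beta\alpha = 1_E$ but $\alpha\beta \neq 1_E$; equivalently $E = \Ima(\alpha) \oplus \Ker(\beta)$ with $\alpha$ a monomorphism and $\Ker(\beta) \neq 0$. So $E \cong E \oplus \Ker(\beta)$ with $\Ker(\beta)$ a nonzero injective module. First I would record that this makes $S$ non-directly-finite, and the goal is to transfer this failure down to $R = \End(M)$.

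The key tool is Proposition \ref{semiregular}: $\Delta = J(R)$, $R/J(R)$ is von Neumann regular, and idempotents lift modulo $J(R)$; moreover the injective homomorphism $\psi : R/\Delta \to S/J(S)$ is available. The strategy is to show directly-finiteness of $M$ is detected in $R/J(R)$. Concretely, suppose $xy = 1_R$ in $R$ with $yx \neq 1_R$; I want to see this can only happen if the corresponding failure already occurs in $E$. Passing to $R/J(R)$: if $\bar y \bar x = \bar 1$ in the von Neumann regular ring $R/J(R)$, then since von Neumann regular rings need not be directly-finite in general, I cannot immediately conclude — so instead I would work with $S$. The cleaner route: from $xy = 1_R$, lift $x, y$ to $S$ via extensions to $E$; since the extension assignment $\varphi : R \to S/J(S)$ is a ring homomorphism with $\Ker \varphi = \Delta$, we get $\bar x \bar y = \bar 1$ in $S/J(S)$. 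Now here is the point where I use that $M$ is automorphism-invariant rather than just that extensions exist: using the argument pattern from the proof of Proposition \ref{semiregular} (produce an idempotent $g \in S$ with $g(M) \subseteq M$ representing the relevant projection, exploiting that $1 - (g - s)$ is an automorphism of $E$ and hence preserves $M$), I would lift the relation $xy = 1$ to genuine elements of $S$ that restrict to $M$, realizing $M$ as $xM \oplus (\text{something})$ and identifying the kernel with a summand of $\Ker(\beta)$-type. Then directly-finiteness of $M$ forces that complementary summand to be zero, hence $\Ker(\beta) \cap M = 0$, and since $M \subseteq_e E$ this gives $\Ker(\beta) = 0$, i.e. $\alpha\beta = 1_E$.

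I expect the main obstacle to be the precise bookkeeping in transferring an idempotent (or a one-sided unit) from $S$ down to $R$ \emph{together with} its complementary summand, in a way that matches $M$-internal decompositions rather than just $E$-internal ones. The subtlety is that a split monomorphism $M \to M$ need not extend to a split monomorphism $E \to E$ a priori, and conversely a splitting in $S$ need not restrict to $M$ — this is exactly where automorphism-invariance (equivalently, the $(C_2)$ property and the extension property for monomorphisms from essential submodules, as used repeatedly above) must be invoked, as in the last paragraph of the proof of Proposition \ref{semiregular} where $(1-a)(M)$ is shown to be a summand and then all of $M$. So the real content is: if $M = M' \oplus M''$ with $M' \cong M$ and $M'' \neq 0$, take injective hulls to get $E = E(M') \oplus E(M'')$ with $E(M') \cong E$ and $E(M'') \supseteq M'' \neq 0$, so $E(M'')$ is a nonzero summand and $E$ is not directly-finite; this is the clean direction and is essentially immediate. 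The contrapositive direction — deriving a nonzero proper summand of $M$ from non-directly-finiteness of $E$ — is the one requiring the automorphism-invariance machinery, and I would structure the proof to push through exactly that implication using the $(C_2)$ property to convert the essential monomorphic image coming from $E$ into an actual direct summand of $M$.
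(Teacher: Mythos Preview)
Your proposal identifies the right target but is missing the central construction. Given a monomorphism $\alpha\colon E\to E$ arising from $\beta\alpha=1_E$, you must produce a monomorphism $r\colon M\to M$; you cannot simply restrict $\alpha$, since $\alpha(M)$ need not lie in $M$. The paper handles this by setting $N=M\cap \alpha^{-1}(M)$, which is essential in $M$, observing that $\alpha|_N\colon N\to M$ is a monomorphism from a submodule of $M$, and invoking \cite{ESS} to extend it to some $r\in\End(M)$; then $r$ is monic because it agrees with $\alpha$ on the essential $N$, and any extension of $r$ to $E$ differs from $\alpha$ by an element of $J(S)$, so that $\psi(r+J(R))=\alpha+J(S)$. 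Your phrases ``lift the relation \dots\ to genuine elements of $S$ that restrict to $M$'' and ``convert the essential monomorphic image coming from $E$'' gesture at this step but never supply it, and without $r$ your appeal to $(C_2)$ has nothing to act on. Note also that the entire middle portion of your sketch, which begins from a hypothetical $xy=1_R$ in $R$, is arguing the converse implication (that non-direct-finiteness of $M$ forces non-direct-finiteness of $E$) and should be discarded.

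The second gap is properness: after $(C_2)$ gives $M=r(M)\oplus M''$ with $r(M)\cong M$, you still need $M''\neq 0$. Your suggestion that $M''$ equals $\Ker(\beta)\cap M$ is not justified, though a repaired version is available: since $r(N)=\alpha(N)\subseteq\alpha(E)$ and $r(N)$ is essential in $r(M)$, one gets $r(M)\cap\Ker(\beta)=0$, so the projection $M\to M''$ is injective on the nonzero submodule $\Ker(\beta)\cap M$. The paper avoids this altogether by arguing directly rather than by contrapositive: using von Neumann regularity of $R/J(R)$ and idempotent lifting (Proposition~\ref{semiregular}), it produces an idempotent $e\in R$ with $(Re+J(R))/J(R)=(Rr+J(R))/J(R)$, shows $e=1$ from injectivity of $r$, and concludes that $r$ is left-invertible modulo $J(R)$; direct-finiteness of $R$ then forces $r$ to be a unit, whence $\alpha+J(S)=\psi(r+J(R))$ is a unit in $S/J(S)$, and therefore $\alpha$ is an automorphism of $E$.
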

\begin{proof}
Let $s,t\in S$ such that $t\circ s=1_E$. This means that $s:E\rightarrow E$ is a monomorphism. Call $N=M\cap (s^{-1}(M))$. Then $N$ is an essential submodule of $M$ and $s|_N:N\rightarrow M$ is a monomorphism. Therefore, it extends to an endomorphism of $r:M\rightarrow M$ by \cite{ESS}. Let $s':E\rightarrow E$ be an extension of $r$. Then $s|_N=s'|_N$ and thus, $s+J(S)=s'+J(S)=\psi(r+J(R))$. Moreover, as $N$ is essential in $M$ and $r|_N=s|_M$, we get that $r$ is also a monomorphism. As $R/J(R)$ is von Neumann regular and idempotents lift modulo $J(R)$, there exists an idempotent $e\in R$ such that $(Re+J(R))/J(R)=(Rr+J(R))/J(R)$. But this means that, in particular, $r=r'e+j$ for some $r'\in R$ and $j\in J(R)$. 
As $J(R)$ consists of $r\in\End(M)$ with essential kernel by Proposition \ref{semiregular}, we deduce that $K=\Ker(j)$ is essential in $M$. Moreover, $r(m)=r'e(m)+j(m)=r'e(m)$ for every $m\in K$. And, as $r$ is injective, this means that $K\cap \Ima(1-e)=0$. But, $K$ being essential in $M$, this implies that $1-e=0$ and thus, $e=1_M$. Therefore, we deduce that $(Rr+J(R))/J(R)=R/J(R)$ and thus, there exists an element $r''\in R$ such that $1-r''\circ r\in J(R)$ and thus, $r''\circ r$ is an automorphism. As we are assuming that $M$ is directly finite, this means that $r$ must be an automorphism and thus, $s+J(S)=\psi(r+J(R))$ is also an automorphism. Therefore, $s$ is an automorphism and, as we are assuming that $t\circ s=1_S$, we get that $s\circ t=1_S$ too. This shows $S$ is directly-finite and consequently, $E$ is directly-finite. 
\end{proof}

Recall that a ring $R$ is called {\em unit-regular} if, for every element $x\in R$, there exists a unit $u\in R$ such that $x=xux$. We can now prove.

\begin{theorem}
Let $M$ be an automorphism-invariant module. If $M$ is directly-finite, then $R/J(R)$ is unit-regular.
\end{theorem}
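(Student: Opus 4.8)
The plan is to transfer unit-regularity from $S/J(S)$ down to $R/J(R)$ along the injective ring homomorphism $\psi$, using the automorphism-invariance of $M$ to lift units of $S/J(S)$ to automorphisms of $E$ and then restrict them to automorphisms of $M$. The first step is to see that $S/J(S)$ is unit-regular. Since $M$ is directly-finite, Lemma \ref{df} shows that $E=E(M)$, and hence $S=\End(E)$, is directly-finite; moreover direct finiteness is inherited by $S/J(S)$, because if $\bar a\bar b=1$ in $S/J(S)$ then $ab=1-j$ for some $j\in J(S)$, so $ab$ is a unit of $S$, so $a$ is right invertible and hence, by direct finiteness of $S$, a unit of $S$, whence $\bar b\bar a=1$. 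On the other hand $E$ is injective, hence quasi-injective, so by the theorems of Faith--Utumi \cite{FU} and Osofsky \cite{O} recalled above, $S/J(S)$ is von Neumann regular and right self-injective. Since a directly-finite, right self-injective, von Neumann regular ring is unit-regular (Goodearl; see \cite{Goodearl}), it follows that $S/J(S)$ is unit-regular.

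Next I would pull a unit back to $R$. Fix $r\in R$ and an extension $s\in S$ of $r$, so that $\psi(r+\Delta)=s+J(S)$. By unit-regularity of $S/J(S)$ there is a unit $\bar v$ of $S/J(S)$ with $\bar s\,\bar v\,\bar s=\bar s$. A unit of $S/J(S)$ lifts to a unit of $S$ (a preimage has a two-sided inverse modulo $J(S)$ and is therefore invertible in $S$), so we may choose an automorphism $v$ of $E$ with $v+J(S)=\bar v$. Because $M$ is automorphism-invariant, $v(M)\subseteq M$ and $v^{-1}(M)\subseteq M$, hence $v(M)=M$, so $u:=v|_M$ is an automorphism of $M$, i.e.\ a unit of $R$; and since $v$ extends $u$ we get $\psi(u+\Delta)=v+J(S)=\bar v$.

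Applying $\psi$ then gives $\psi((r\circ u\circ r)+\Delta)=\bar s\,\bar v\,\bar s=\bar s=\psi(r+\Delta)$, and since $\psi$ is injective and $\Delta=J(R)$ by Proposition \ref{semiregular}, we conclude $r\circ u\circ r+J(R)=r+J(R)$ with $u+J(R)$ a unit of $R/J(R)$. As $r$ was arbitrary, $R/J(R)$ is unit-regular. I expect the only non-routine ingredient to be the structural input of the first step --- that direct finiteness already forces unit-regularity for right self-injective von Neumann regular rings --- the remainder being a straightforward assembly of Lemma \ref{df}, Proposition \ref{semiregular}, and the restriction of automorphisms of $E$ to automorphisms of $M$.
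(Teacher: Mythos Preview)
Your argument is correct and genuinely different from the paper's. The paper works constructively: given $r\in R$, it builds an explicit automorphism $t$ of $E$ by hand, using the decomposition $E=E'\oplus E''$ with $E'=\Ima(g)$ and invoking internal cancellation for the directly-finite injective module $E$ to get $E''\cong E(K)$; it then restricts $t$ to $M$ and argues, as in Lemma~\ref{df}, that the restriction is an automorphism of $M$. You instead work structurally: you first prove that $S/J(S)$ is unit-regular (directly finite, von Neumann regular, right self-injective), then lift the required unit of $S/J(S)$ to an automorphism of $E$ and restrict it to $M$ using automorphism-invariance of $M$ under both $v$ and $v^{-1}$. Your route is shorter and more conceptual, and your restriction step (using $v^{-1}(M)\subseteq M$ to force $v(M)=M$) is cleaner than the paper's detour through the idempotent-lifting argument of Lemma~\ref{df} to show the restricted map is surjective. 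The paper's approach, on the other hand, is more self-contained, since it only needs internal cancellation for directly-finite injective modules rather than the full structure result that directly-finite right self-injective regular rings are unit-regular. One small caveat: the reference \cite{Goodearl} in this paper is to a Goodearl--Warfield article, not to Goodearl's \emph{Von Neumann Regular Rings}, which is where the result you invoke actually lives; if you use this argument you should cite the correct source.
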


\begin{proof}
We are going to adapt the proof of Proposition \ref{semiregular}. Let $r\in R$ and let us construct $s,g\in S$ as in Proposition \ref{semiregular}.
We know that, if we call $E'=\Ima(g)$, $E'\cong E(L)$ is an injective submodule of $E$. So there exists a submodule $E''$ of $E$ such that $E=E'\oplus E''$. We have that $E$ is directly finite by the above lemma. Since a directly-finite injective module satisfies the internal cancellation property, we have $E''\cong E(K)$. Let $\varphi:E''\rightarrow E(K)$ be an isomorphism. 
Define $t:E\rightarrow E$ as follows: $t|_{E''}=\varphi$ whereas $t|_{E'}=h$. 
Clearly, $t\circ g=u\circ \pi$, where $u:E(L)\rightarrow E$ and $\pi:E\rightarrow E(L)$ are the inclusion and projection associated to the decomposition $E=E(K)\oplus E(L)$. 
Moreover, $t$ is clearly an automorphism and this implies that $t(M)\subseteq M$. 
Call $t':M\rightarrow M$ the restriction of $t$ to $M$. Then $t'$ is a monomorphism and, as $R/J(R)$ is von Neumann regular and idempotents lift modulo $J(R)$, there exists an idempotent $e\in R$ such that $(Rt'+J(R))/J(R)=(Re+J(R))/J(R)$. 
Once again as in the proof of Lemma \ref{df}, we get that $t'$ must be an automorphism. Finally, 
$$
\begin{array}{c}
\psi((r\circ t'\circ r)+J(R))=(s\circ t'\circ s)+J(S) =\\ (g\circ t'\circ g)+J(S) = g+J(S) = \psi(r+J(R))
\end{array}
$$
and, as $\psi$ is injective, we get that $(r\circ t'\circ r)+J(R)=r+J(R)$. As $t'\in R$ is an automorphism, this shows that $R/J(R)$ is unit-regular
\end{proof}

The example given in Remark \ref{nonqi} shows that if $M$ is a directly-finite automorphism-invariant module, even then $R/J(R)$ need not be self-injective. 

\bigskip

As a consequence of the above theorem, we have the following

\begin{corollary} \label{internal}
Let $M$ an automorphism-invariant module. Then the following are equivalent;
\begin{enumerate}[(i)]
\item $M$ is directly finite.
\item $M$ has the internal cancellation property.
\item $M$ has the cancellation property.
\item $M$ has the substitution property.
\end{enumerate}
\end{corollary}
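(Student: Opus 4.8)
The implications (iv) $\Rightarrow$ (iii) $\Rightarrow$ (ii) $\Rightarrow$ (i) are just the general hierarchy already recorded in the introduction, so I would only note briefly why each holds. The substitution property of $M$ yields cancellation not merely for $M$ but for every direct summand of $M$ (if $M = A\oplus A'$ and $A\oplus X\cong A\oplus Y$, add $A'$ to both sides to get $M\oplus X\cong M\oplus Y$, hence $X\cong Y$), and this is precisely internal cancellation of $M$; and internal cancellation of $M$, applied to $M = N\oplus C\cong M\oplus 0$ with $N\cong M$, forces $C\cong 0$, i.e. $M$ is directly finite. Thus the entire content of the corollary is the implication (i) $\Rightarrow$ (iv).

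To prove (i) $\Rightarrow$ (iv), assume $M$ is directly finite and put $R=\End(M)$. By the preceding theorem (unit-regularity of $R/J(R)$ for directly-finite automorphism-invariant $M$), $R/J(R)$ is unit-regular. Since a von Neumann regular ring has stable range $1$ precisely when it is unit-regular (Goodearl), $R/J(R)$ has stable range $1$. Stable range $1$ then lifts to $R$: given $aR+bR=R$, the images of $a,b$ generate $R/J(R)$, so there is $y\in R$ with $\overline{a+by}$ invertible in $R/J(R)$; and an element of $R$ is invertible if and only if its image in $R/J(R)$ is, whence $a+by$ is a unit of $R$. Hence $\End(M)$ has stable range $1$, and by Warfield's theorem that a module $A$ has the substitution property if and only if $\End(A)$ has stable range $1$, we conclude that $M$ has the substitution property.

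The argument needs essentially no new ideas once the preceding theorem is in hand; the only points requiring care are quoting the two external inputs correctly — ``unit-regular $\Leftrightarrow$ stable range $1$'' for regular rings and ``substitution property $\Leftrightarrow$ stable range $1$ of the endomorphism ring'' — while the passage of stable range $1$ across $R\to R/J(R)$ is routine. An alternative, slightly longer route: from (i) one gets $R/J(R)$ unit-regular and, by Theorem \ref{main}, $R$ an exchange ring, from which $M$ has internal cancellation, i.e. (ii); one then closes the cycle using Fuchs's theorem that cancellation and internal cancellation coincide for a module with the finite exchange property, together with Warfield's characterization for (iv). Either way, there is no real obstacle here — the substantive work was already carried out in establishing unit-regularity of $R/J(R)$.
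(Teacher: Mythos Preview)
Your proof is correct and close in spirit to the paper's, but the logical route and the external references differ. The paper argues (i)$\Rightarrow$(ii) first: from unit-regularity of $R/J(R)$ it invokes Khurana--Lam to conclude that $R$ has internal cancellation as a ring, and then Guralnick--Lanski to pass to internal cancellation of $M$; (ii)$\Rightarrow$(iii) is Fuchs's theorem together with Theorem~\ref{main}; (iii)$\Rightarrow$(i) is general; and finally (ii)$\Leftrightarrow$(iv) is obtained by quoting Yu's result on stable range one for exchange rings. Your approach instead goes straight for (i)$\Rightarrow$(iv): you translate unit-regularity of $R/J(R)$ into stable range $1$ (Goodearl), lift stable range $1$ through $R\to R/J(R)$ by the elementary observation that units lift modulo the Jacobson radical, and then apply Warfield's characterization of the substitution property. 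Your path is arguably the more direct one and avoids the detour through internal cancellation; the paper's path has the virtue of making the intermediate equivalence (ii)$\Leftrightarrow$(iv) explicit and citable. Either way, as you note, the real work was already done in establishing unit-regularity of $R/J(R)$.
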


\begin{proof}
(i)$\implies$(ii). If $M$ is a directly-finite automorphism-invariant module, then by the above theorem $R/J(R)$ is unit-regular and therefore, by \cite{KL}, $R$ is a ring with the internal cancellation. Now it follows from Guralnick and Lanski \cite{GL}, that $M$ has the internal cancellation property. This shows (i) implies (ii). 

(ii)$\implies$(iii). Since we have already shown that an automorphism-invariant module satisfies the exchange property, (ii) implies (iii), by Fuchs \cite{Fuchs}. 

(iii)$\implies$(i). This implication holds for any module. 

Thus we have established the equivalence of (i), (ii) and (iii). For a module with the finite exchange property, the equivalence of (ii) and (iv) follows from \cite{Yu}. This completes the proof.
\end{proof}

\bigskip

\bigskip

\bigskip


\begin{thebibliography}{99}

\bigskip

\bigskip

\bibitem{AF} F. W. Anderson and K. R. Fuller, {\it Rings and
Categories of Modules}, Graduate Texts in Mathematics, Vol. 13,
Springer-Verlag, New York, 1992.

\bibitem{CKLNZ}
V. P. Camillo, D. Khurana, T. Y. Lam, W. K. Nicholson, Y. Zhou, Continuous modules are clean, J. Algebra, 304 (2006), 94-111.


%\bibitem{CLVW}
%J. Clark, C. Lomp, N. Vanaja, R. Wisbauer, {\it Lifting Modules: Supplements and projectivity in module theory}, Frontiers in Mathematics, Birkhauser Verlag, Basel, 2006.

\bibitem{CJ}
P. Crawley, B. J\'{o}nnson, {\it Refinements for infinite direct decompositions of algebraic systems}, Pacific J. Math. 91 (1980), 249-261.

\bibitem{DF}
S. E. Dickson, K. R. Fuller, {\it Algebras for which every indecomposable right module is invariant in its injective envelope}, Pacific J. Math., vol. 31, no. 3 (1969), 655-658.

\bibitem{ESS}
N. Er, S. Singh, A. K. Srivastava, {\it Rings and modules which are stable under automorphisms of their injective hulls}, J. Algebra, 379 (2013), 223-229.

\bibitem{FU}
C. Faith, Y. Utumi, {\it Quasi-injective modules and their endomorphism rings}, Arch. Math. 15 (1964), 166-174.

\bibitem{Fuchs}
L. Fuchs, {\it On quasi-injective modules}, Ann. Sculoa Norm. Sup. Pisa 23 (1969), 541-546.

\bibitem{Goodearl}
K. R. Goodearl, R. B. Warfield, Jr., {\it Algebras over zero-dimensional rings}, Math. Ann. 223 (1989), 157-168.

\bibitem{GS}
P. A. Guil Asensio, A. K. Srivastava,  {\it Additive unit representations in endomorphism rings and an extension of a result of Dickson and Fuller}, Contemporary Mathematics, Amer. Math. Soc., to appear.

\bibitem{GL}
R. Guralnick, C. Lanski, {\it Pseudosimilarity and cancellation of modules}, Linear Algebra Appl. 47 (1982), 111-115.

\bibitem{JW}
R. E. Johnson, E. T. Wong, {\it Quasi-injective modules and irreducible rings}, J. London Math. Soc. 36 (1961), 260-268.

\bibitem{KL} D. Khurana, T.Y. Lam, {\it Rings with internal cancellation}, Journal of Algebra 284 (2005) 203-235.

%\bibitem{l2} 
%T. Y. Lam, {\it Lectures on Modules and Rings}, Springer-Verlag, 1999.

\bibitem{LZ}
T. K. Lee, Y. Zhou, {\it Modules which are invariant under automorphisms of their injective hulls}, J. Alg. Appl., 12 (2), (2013).

\bibitem{MM}  S. H. Mohamed, B. J. M\"{u}ller, {\it Continuous and discrete modules}, London Mathematical Society Lecture Note Series, 147. Cambridge University Press, Cambridge, 1990. viii+126 pp

\bibitem{Nichol}
W. K. Nicholson, {\it Lifting idempotents and exchange rings}, Trans. Amer. Math. Soc., vol. 229, (1977), 269-278.

\bibitem{Nielsen}
P. P. Nielsen, {\it Square-free modules with the exchange property}, J. Algebra 323, 7 (1993), 2001-2010. 

\bibitem{O}
B. L. Osofsky, {\it Endomorphism rings of quasi-injective modules}, Canad. J. Math. 20 (1968), 895-903.

\bibitem{SS1}
S. Singh, A. K. Srivastava, {\it Dual automorphism-invariant modules}, J. Algebra, 371 (2012), 262-275. 

\bibitem{SS2}
S. Singh, A. K. Srivastava, {\it Rings of invariant module type and automorphism-invariant modules}, Contemporary Mathematics, Amer. Math. Soc., to appear.

\bibitem{Teply}
M. L. Teply, {\it Pseudo-injective modules which are not quasi-injective}, Proc. Amer. Math. Soc., vol. 49, no. 2 (1975), 305-310.

\bibitem{Warfield1}
R. B. Warfield, Jr., {\it Decompositions of injective modules}, Pacific J. Math. 31 (1969), 263-276.

\bibitem{Warfield2}
R. B. Warfield, Jr., {\it Exchange rings and decompositions of modules}, Math. Ann. 199 (1972), 31-36. 

\bibitem{Yu}
H. -P. Yu, {\it Stable range one for exchange rings}, J. Pure Appl. Algebra 98 (1995), 105-109.

\end{thebibliography}
\end{document}